\newtheorem{theorem}{Theorem}
\newtheorem{definition}{Definition}
\newtheorem{proposition}{Proposition}
\newtheorem{example}{Example}
\newtheorem{remark}{Remark}
\newenvironment{proof}[0]
{\medskip \noindent {\it Proof.} \ }{\ \hfill $\Box$\bigskip}
\title{The R-IFSs and their attractors}
\author{Nguyen Viet Hung, Mai The Duy and Vu Thi Hong Thanh}
\newcommand{\Addresses}{{
  \bigskip
  \footnotesize

  Nguyen Viet Hung,\textsc{\textbf{specific address} }\par\nopagebreak
  \textit{E-mail address}: \texttt{nvh0@yahoo.com}

  \medskip

  Mai The Duy , \textsc{FPT Education, \textbf{specific address}}\par\nopagebreak
  \textit{E-mail address}: \texttt{duymt2@fe.edu.vn} or \texttt{vnwfpb@fgmail.com} \par\nopagebreak
  \textit{Telephone number}: (+84)902516790

  \medskip

  Vu Thi Hong Thanh, \textsc{Department of Mathematics, Vinh University, 182 Le Duan, Vinh city, Nghe An province}\par\nopagebreak
  \textit{E-mail address}: \texttt{thanhvth@vinhuni.edu.vn}

}}
\begin{document}
\maketitle

\begin{abstract}

This paper introduces a new class of iterated function systems (IFSs) called R-IFSs, which include both rotation/reflection maps and contraction maps. The study of R-IFSs is motivated by the recent research direction on enriching IFSs by adding other types of mappings. The paper investigates the existence and properties of the semi-attractor and compact invariant sets of R-IFSs, as well as the class of minimal invariant sets of R-IFSs. The paper also provides a familiar setting that 
\vspace{0.5cm}
is an invariant set of R-IFS but not an invariant set of any IFS. 
\vspace{0.45cm}
\textbf{Keywords:} Isometry. Iterated function system. Chaos game. Attractor\\
\textbf{Mathematics Subject Classification Primary:} 28A80; Secondary: 37B55 · 68R15

\end{abstract}

\section{Introduction} 

Fractals have wide applications in biology, computer graphics, quantum physics, and several other areas of applied
sciences \cite{barnsley2014fractals}. Especially, there are important consequences of fractals in several topics of applied sciences. Therefore, studying the existence or broadly the class of invariant sets or finding ways to build them has always been a problem of interest to many mathematicians in the world. Iterated function system (abbreviated as IFS) is a formalism for generating invariant sets in general and exactly self-similar fractals in particular based on the work of Hutchinson \cite{Hut81}, and popularized by Barnsley \cite{barnsley2014fractals}. 
To expand fractal sets, IFSs have been extended in different ways. 
For example, instead of considering a finite system of contraction mappings, one considers an infinite system or countable contraction mappings (\cite{barrozo2014countable}, \cite{mauldin1995infinite}). In the other direction,  instead of looking at mappings on metric spaces (\cite{kumari2019multi}), one can consider mappings on a class of expending spaces, such as $b$-metric, pseudo-metric, quasi-metric...or replace the Banach contraction by other contraction conditions such as $\varphi$ -contraction, $\psi$-contraction,... (\cite{priyanka2020iterated}) and get many results exciting. In particular, in recent years, a new research direction has appeared in the expansion of the concept of IFS. They enrich IFS by adding to it with other types of mapping that are not Banach contractions.  Specifically, K. Lesniak and
N. Snigireva in \cite{lesniak2022iterated} consider the case when IFS $\mathcal{F}$ is a finite family
of Banach contractions and $\mathcal{G}$ is a \textbf{singleton} consisting of periodic symmetry (that is, the isometry whose certain iteration is the identity) with a fixed point. Roughly speaking, the main results of \cite{lesniak2022iterated} show that in considered
cases, the semiattractor $A^*_{\mathcal{F}\cup \mathcal{G}}$ is in fact the attractor of certain IFSs which are somehow related to  $\mathcal{F}$ and $\mathcal{G}$. Moreover, K. Lesniak and N. Snigireva proved that the semiattractor $A^*_{\mathcal{F}\cup \mathcal{G}}$  can be recovered by the famous chaos game algorithm. After that, to motivate  the recent paper of  K. Lesniak and N. Snigireva, F. Strobin (in \cite{strobin2021contractive}) investigate the properties of the semiattractor $A^*_{\mathcal{F}\cup \mathcal{G}}$ of an IFS $\mathcal{F}$ enriched by some other IFS $\mathcal{G}$. They extend the framework. Instead of a single periodic isometry, in their main results,  they considers that $\mathcal{G}$ is just an IFS consisting of nonexpansive maps. He also weakens the contractive assumptions on maps
from the IFS $\mathcal{F}$ and allows $\mathcal{F}$ to be infinite. They use the presented machinery to prove that the so called lower transition attractors due to Vince are semiattractors of enriched IFSs. 

 Initiation, an iterated function system is defined as a finite set of contraction mappings on a  complete metric space $X,$ and iteration is defined as a sequential composition of these contraction mappings, where each mapping has some nonzero probability of being used at every iteration. Based on this notion and the famous Banach fixed point theorem \cite{Banach1922}, Hutchinson showed that for the complete metric space $X,$ such a system of functions $\{f_1,...,f_n\}$ has a unique closed bounded set $F,$ satisfying $$F=\bigcup_{i=1}^n f_i(F).$$ The set $F$ is called an invariant set or an attractor set (or sometimes,   a fractal set) of the iterated function system. The iterated function systems are introduced as a unified way of generating a broad class of fractals.  

  To enrich the initial IFSs, on the direction of extending the initiate concept of IFS by adding other types of mapping, as in \cite{lesniak2022iterated} and \cite{strobin2021contractive}, in this paper, we introduce the class of iterated function systems 
 including both rotation/reflection maps and contraction maps, denoted by R-IFSs. Thus, instead of adding  a single isometric mapping to IFS as in \cite{lesniak2022iterated}, we add to IFS by a finite family of isometric mappings. We study the problem:  What can we say about the semiattractor and the invariant sets of  IFSs. We also provide a familiar setting that is an invariant set of R-IFS but is not an invariant set of any IFS.
 
The remaining structure of this paper is as follows. First,  the notion of R-IFS and examples are introduced. Then, we prove the existence and properties of the semiattractor  and the invariant sets of R-IFSs. In this aspect, we prove that the class of invariant sets of IFSs is the subset of the class of  minimal invariant sets of the R-IFSs. The  minimal invariant set of new IFS is a semiattractor. Finally, we show that there are minimal invariant sets of the R-IFSs, which are not the invariant sets of any IFSs. We also illustrate our main results with several
examples that are related to classical fractals such as the Sierpinski gasket with two mappings and the
Cantor target. 

--------------------------

\section{The main theorems}

Throughout this paper, the standard letters $\mathbb{R}, \mathbb{R}^d, \mathbb{N}$  and $ \mathbb{N}^\ast$ will denote the set of all real numbers, the set of all the $d$-tuples of real numbers ($d$ is a positive integer number), the set of all nonnegative integer numbers and the set of all positive integer numbers, respectively.

Given a metric space $X,$ by $2^X, \mathcal{K}(X)$ and $C_b(X)$ we will denote the hyperspaces of all subsets of $X,$ all nonempty and compact subsets of $X,$ and nonempty closed and bounded subsets of $X,$ respectively. The spaces $\mathcal{K}(X), C_b(X)$ will be considered as metric spaces endowed with the Hausdorff–Pompeiu metric. It is well known, that $\mathcal{K}(X)$ and $C_b(X)$ are complete provided that $X$ is complete. For $A \subset X,$
by $cl A$ we will denote the closure of the set $A.$

\begin{definition}
Let $(X, d)$ be a  metric space and a mapping  $f: X \rightarrow X$.

i) The mapping  $f$ is called a Banach contraction on $X$ if   there is a number $ \alpha \in  [0, 1)$   such that  for any $x, y \in X$ the following holds:
$$d(f(x), f(y)) \leq \alpha d (x, y). $$  The number $\alpha $ is called a contraction ratio of $f$.

ii) The mapping $f$  preserves the distances between points is called an isometric mapping. This means if $x, y \in X$ then
$$d(f(x), f(y)) = d (x, y). $$
\end{definition}

\begin{definition}
By an iterated function system  (IFS for short) we will mean
any finite family $\mathcal{F}=\{f_1,...,f_n\}$ of contraction selfmaps of a metric space $X.$
If $\mathcal{F}$ is an IFS, then

i) by $F_{\mathcal{F}}$ we  denote the Hutchinson operator to $\mathcal{F},$ that is, the map
$F_{\mathcal{F}}: \mathcal{K}(X) \rightarrow \mathcal{K}(X)$  defined by: $$\forall A\subset X: F_{\mathcal{F}}(A) =\bigcup_{f_i\in \mathcal{F}}f_i(A).$$ 

ii) by $\widetilde {F}_{\mathcal{F}}$ we denote the weak Hutchinson operator to $\mathcal{F},$ that is, the map
$\widetilde {F}_{\mathcal{F}}: 2^X \rightarrow 2^X$  defined by: $$\forall A\subset X: \widetilde {F}_{\mathcal{F}}(A) =cl(\bigcup_{f_i\in \mathcal{F}}f_i(A)).$$ 
\end{definition}
We also write $F^n$ for the $n$-fold composition of $F.$

We recall  a definition of Kuratowski limits (see, e.g., \cite{beer1993topologies},  \cite{lasota2000attractors} for detailed discussion).
\begin{definition} Let $(S_n)$ be a sequence of subsets of a metric space $X.$ The upper Kuratowski limit of $(S_n)$ is defined by
$$Ls(S_n) := \{y \in X : y = \lim_{n \rightarrow \infty} x_n \text{ for some sequence } (x_n) $$
$$\text{ so that } x_n \in S_n \text{ for infinitely many } n \in  \mathbb{N}\}$$
and the lower Kuratowski limit of $(S_n)$ is defined by
$$Li(S_n) := \{y \in  X : y = \lim_{n \rightarrow \infty} x_n \text{ for }  x_n \in  S_n \text{ for all } n \in  \mathbb{N}\}.$$
If $Ls(S_n) = Li(S_n),$ then its common value, denoted by $Lt(S_n),$ is called the Kuratowski limit of $(S_n).$
\end{definition}

Now we recall the notion of semiattractor of an IFS. We refer the reader to \cite{lasota2000attractors} or \cite{lesniak2022iterated} for details.

\begin{definition}
Let IFS  $\mathcal{F}=\{f_1,...,f_n\}$ be  a system of continuous maps and $F_{\mathcal{F}}$ the associated Hutchinson operator. A nonempty closed set $A^* \subset  X$ is

i) an \textbf{invariant set } of $\mathcal{F}$ if $F_{\mathcal{F}}(A^*) = A^*;$

ii) a Hutchinson \textbf{attractor } of $\mathcal{F}$ if $F_{\mathcal{F}}^n(S) \rightarrow A^*$ for all nonempty closed and bounded $S\subset X$
where the convergence takes place in the Hausdorff metric;

iii) a \textbf{semiattractor } if $\bigcap_{x\in X} Li(F^n_{\mathcal{F}}(\{x\})) = A^*;$

iv)  a \textbf{minimal invariant set }  of $\mathcal{F}$ if  $A^* \subset S$ for any invariant set $S$ of $\mathcal{F}.$

\end{definition}
The relation between invariant sets, attractors, and semiattractors is summarised below.

\begin{theorem} \label{dl1} (\cite{myjak2003attractors})
Let $\mathcal{F}=\{f_1,...,f_n\}$ be an IFS and $A^* \subset  X$ a nonempty closed set.

i) A Hutchinson attractor of $\mathcal{F}$ is a unique nonempty closed bounded invariant set.

ii) A semiattractor $A^*$ of $\mathcal{F}$ is the smallest invariant set, i.e., $A^* \subset C$ for every
nonempty closed invariant set $C.$ 

iii) A Hutchinson attractor of  $\mathcal{F}$  is a compact semiattractor.

iv) If $A^*$ is a semiattractor of $\mathcal{F},$ then $Lt F^n(S) = A^*$ for every nonempty $S \subset  A^*.$
  
\end{theorem}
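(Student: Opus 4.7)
The plan is to prove the four parts in order, leaning heavily on the continuity of $F_{\mathcal{F}}$ on $\mathcal{K}(X)$ in the Hausdorff metric together with the general fact that for any continuous $f$ and any sequence of sets, $f(Li(S_n)) \subset Li(f(S_n))$.

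For (i), I would first show that a Hutchinson attractor $A^*$ is invariant by applying $F_{\mathcal{F}}$ to the definitional convergence $F^n_{\mathcal{F}}(S)\to A^*$: continuity in Hausdorff distance gives $F^{n+1}_{\mathcal{F}}(S)\to F_{\mathcal{F}}(A^*)$, while the same sequence also converges to $A^*$, forcing $F_{\mathcal{F}}(A^*)=A^*$. Uniqueness then follows at once, since any closed bounded invariant set $B$ satisfies $F^n_{\mathcal{F}}(B)=B$ for every $n$ yet must also converge to $A^*$.

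For (ii), I split the claim into two assertions: $A^*$ is itself invariant, and $A^*\subset C$ for every nonempty closed invariant $C$. The second is the easier half: if $x\in C$, then $F^n_{\mathcal{F}}(\{x\})\subset F^n_{\mathcal{F}}(C)=C$ for every $n$, and since $C$ is closed we conclude $Li(F^n_{\mathcal{F}}(\{x\}))\subset C$; intersecting over $x\in C\subset X$ yields $A^*\subset C$. The invariance of $A^*$ is the main obstacle of the whole theorem. One direction, $F_{\mathcal{F}}(A^*)\subset A^*$, follows by applying each $f_i\in\mathcal{F}$ to a sequence $a_n\in F^n_{\mathcal{F}}(\{x\})$ converging to a point of $A^*$ and using the index shift $Li(F^{n+1}_{\mathcal{F}}(\{x\}))=Li(F^n_{\mathcal{F}}(\{x\}))$. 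The reverse inclusion $A^*\subset F_{\mathcal{F}}(A^*)$ is the delicate step: one writes each approximating $a_n=f_{i_n}(b_n)$ with $b_n\in F^{n-1}_{\mathcal{F}}(\{x\})$ and uses finiteness of $\mathcal{F}$ to fix the index $i_n=i$ along a subsequence, then the Banach contractive condition on $f_i$ to pass $b_n$ to a Cauchy (hence convergent) subsequence whose limit lies in $A^*$, giving $a=f_i(b)\in F_{\mathcal{F}}(A^*)$.

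Part (iii) is the quickest: a Hutchinson attractor is compact as a limit point in the complete metric space $\mathcal{K}(X)$, and for each $x\in X$ the hypothesis yields $F^n_{\mathcal{F}}(\{x\})\to A^*$ in the Hausdorff metric, which for nonempty compact sets is strictly stronger than Kuratowski convergence, so $Li(F^n_{\mathcal{F}}(\{x\}))=A^*$ for every $x$, and the intersection of these equal sets is still $A^*$. Finally, for (iv), the inclusion $A^*\subset Li(F^n_{\mathcal{F}}(S))$ is immediate from the semiattractor property applied to any $x\in S$, since $F^n_{\mathcal{F}}(\{x\})\subset F^n_{\mathcal{F}}(S)$ forces $A^*\subset Li(F^n_{\mathcal{F}}(\{x\}))\subset Li(F^n_{\mathcal{F}}(S))$; the reverse inclusion $Ls(F^n_{\mathcal{F}}(S))\subset A^*$ uses the invariance of $A^*$ secured in (ii), since $S\subset A^*$ implies $F^n_{\mathcal{F}}(S)\subset A^*$ for all $n$, and $A^*$ is closed. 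Combining gives $Lt\, F^n_{\mathcal{F}}(S)=A^*$.
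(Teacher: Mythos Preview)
The paper does not prove this theorem; it is quoted from \cite{myjak2003attractors} as a known result and no argument is supplied. So there is no ``paper's proof'' to compare against, and your proposal must stand on its own.

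Parts (i), (iii), and (iv) of your sketch are sound. The genuine gap is in part (ii), in the inclusion $A^*\subset F_{\mathcal{F}}(A^*)$. You write $a_n=f_{i_n}(b_n)$, pass to a subsequence with $i_n\equiv i$, and then claim that ``the Banach contractive condition on $f_i$'' forces $(b_n)$ to be Cauchy. But the contraction inequality $d(f_i(x),f_i(y))\le \alpha\, d(x,y)$ goes the wrong way: it gives only $d(b_n,b_m)\ge \alpha^{-1} d(f_i(b_n),f_i(b_m))$, which yields no upper bound on $d(b_n,b_m)$. A constant map is a contraction with $\alpha=0$, and there $(b_n)$ can be completely arbitrary. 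So this step fails as written. There is a second, independent problem: even if $(b_n)$ converged to some $b$, your sequence lives in $F^{n-1}_{\mathcal{F}}(\{x\})$ for a \emph{single} chosen $x$, so at best you obtain $b\in Ls(F^{n}_{\mathcal{F}}(\{x\}))$, not $b\in \bigcap_{y\in X} Li(F^{n}_{\mathcal{F}}(\{y\}))=A^*$.

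A repair in the contractive setting is to bypass the preimage argument entirely: for Banach contractions on a complete space the Hutchinson attractor exists, is compact, and by your part (iii) coincides with the semiattractor; invariance then comes for free from (i). If you want an argument that works for semiattractors more generally (continuous maps as in Definition~4), you need the machinery of \cite{lasota2000attractors} or \cite{myjak2003attractors}: one shows that $Li(F^n_{\mathcal{F}}(\{x\}))$ is independent of $x$ and is mapped into itself by the weak Hutchinson operator $\widetilde F_{\mathcal{F}}$, from which minimality and $\widetilde F_{\mathcal{F}}$-invariance follow; the equality $F_{\mathcal{F}}(A^*)=A^*$ (without closure) then needs either compactness of $A^*$ or an additional closedness argument for $\bigcup_i f_i(A^*)$.
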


Denote $C$ as  a group of isometric  mappings $T$ on $\mathbb{R}^d$  such that $T(0) = 0.$ 

\begin{definition}
An R-IFS  is a function system consisting of $m$ isometric mappings $R_1,...,R_m$ belonging to $C $ and $n$ contraction mappings $f_1,..., f_n$ ($m, n>0$) on $\mathbb{R}^d $ and denoted  this R-IFS  by  $\mathcal{F_R}=\{ R_1,...,R_m, f_1,... f_n \}.$     
\end{definition}

For an IFS of finite contraction maps on a complete metric space, there is a unique invariant set of this IFS. But, for each R-IFS, we get the following result.

\begin{theorem}\label{dl2}
There exists an infinite number of compact invariant sets of the R-IFS  $\mathcal{F_R}=\{ R_1,...., R_m, f_1,..., f_n \}$.    
\end{theorem}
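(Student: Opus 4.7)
The plan is to exhibit an uncountable family $\{A_r\}_{r \ge r_0}$ of compact invariant sets of $\mathcal{F_R}$, distinguished by the maximum norm of the points they contain. The driving observation is that every $R_i \in C$ is an isometry fixing the origin, so it permutes each sphere $S_r = \{x \in \mathbb{R}^d : \|x\| = r\}$; that is, $R_i(S_r) = S_r$ for every $r > 0$. Thus the ``rotation/reflection part'' of the Hutchinson operator already leaves every sphere centered at $0$ invariant, and we only have to accommodate the contractions $f_1, \ldots, f_n$.

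First I would choose $r_0 > 0$ large enough that the closed ball $B_r = \{x : \|x\| \le r\}$ is forward invariant under $F_{\mathcal{F_R}}$ for every $r \ge r_0$. Since $R_i(B_r) = B_r$ and each $f_j$ with contraction ratio $\alpha_j \in [0,1)$ satisfies $\|f_j(x)\| \le \alpha_j \|x\| + \|f_j(0)\|$, the threshold $r_0 := \max_{1 \le j \le n} \|f_j(0)\|/(1-\alpha_j)$ suffices. For each $r \ge r_0$ define
$$A_r := \overline{\bigcup_{k=0}^{\infty} F_{\mathcal{F_R}}^{\,k}(S_r)}.$$
Because $R_i(S_r) = S_r$, we have $S_r \subseteq F_{\mathcal{F_R}}(S_r)$, so the iterates $F_{\mathcal{F_R}}^{\,k}(S_r)$ form an increasing sequence of subsets of the compact ball $B_r$; hence $A_r$ is a closed subset of $B_r$, and in particular compact.

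To verify $F_{\mathcal{F_R}}(A_r) = A_r$, I would invoke continuity of the Hutchinson operator as a self-map of $\mathcal{K}(\mathbb{R}^d)$ together with the standard fact that an increasing sequence of compact sets with bounded union converges in the Hausdorff metric to the closure of that union; applying $F_{\mathcal{F_R}}$ to $F_{\mathcal{F_R}}^{\,k}(S_r) \to A_r$ then gives $F_{\mathcal{F_R}}(A_r) = \lim_{k} F_{\mathcal{F_R}}^{\,k+1}(S_r) = A_r$ by uniqueness of limits. Distinctness is automatic: the inclusion $S_r \subseteq A_r \subseteq B_r$ forces $\max_{x \in A_r} \|x\| = r$, so different values $r \in [r_0, \infty)$ yield pairwise distinct (indeed, uncountably many) compact invariant sets of $\mathcal{F_R}$.

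The main obstacle, though technical rather than conceptual, is the continuity/monotone-limit step. Because $F_{\mathcal{F_R}}$ has isometric components, the Banach fixed-point theorem is unavailable, and the fixed-point equation $F_{\mathcal{F_R}}(A_r) = A_r$ must be checked by hand, using sphere-invariance of the $R_i$ and the absorbing property of $B_r$ to control the iteration. Once those two ingredients are combined, the infinite family of invariant sets is produced simply by varying the radius of the generating sphere.
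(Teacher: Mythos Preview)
Your argument is correct, but the paper's route is considerably shorter. The paper simply takes the closed balls $B_\lambda = \{x : \|x\| \le \lambda\}$ themselves as the invariant sets: since each $R_i$ is an isometry fixing $0$, one has $R_i(B_\lambda) = B_\lambda$ (equality, not just inclusion), so $B_\lambda \subseteq F_{\mathcal{F_R}}(B_\lambda)$ automatically; combined with the forward invariance $f_j(B_\lambda) \subseteq B_\lambda$ for $\lambda \ge \lambda_0 := \max_j \|f_j(0)\|/(1-r)$, this gives $F_{\mathcal{F_R}}(B_\lambda) = B_\lambda$ outright, with no iteration or limit needed. Varying $\lambda$ yields the infinite family.

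Your construction instead starts from the sphere $S_r$, iterates, and passes to a Hausdorff limit. This is valid---the Hutchinson operator is $1$-Lipschitz on $\mathcal{K}(\mathbb{R}^d)$ because every map in $\mathcal{F_R}$ is nonexpansive, and increasing sequences of compacts inside a fixed compact do converge to the closure of their union---but all of that machinery is avoidable once you notice that the surjectivity of $R_i$ on $B_\lambda$ already forces the fixed-point equation for the ball. What your approach buys is a family of invariant sets $A_r$ that are in general strictly smaller than the balls (they need not contain interior points of $B_r$), which is a finer piece of information than the theorem asks for; the cost is the extra continuity/limit step you flag as the main obstacle.
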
 

\begin{proof}
Let $r_j$ be the contraction ratio of $f_j$, for $j=1,...,n$. For any $x\in \mathbb {R}^d$, we have    $$||f_j(x)|| \leq ||f_i(0)||+||f_j(x)-f_j(0)||\leq ||f_j(0)||+ r_j||x-0||= ||f_j(0)||+r_j||x||.$$  Set $\lambda_0=\max \{\displaystyle \frac{||f_j(0)||}{1-r}: j=1,2,...,n\},$ where 
$r=\max\{r_1,...,r_n\}.$ Then for each $j\in \{1,...,n\},$ 
$f_j(S(\lambda)) \subseteq S(\lambda)$, where $S(\lambda)$ is a closed ball with center $0$ and radius $\lambda >\lambda_0$. At the same time, for each $i\in \{1,2,...,m\},$  we have $R_i(S(\lambda))=S(\lambda)$. Therefore, for each $\lambda>\lambda_0$ the closed ball with center $0,$ radius  $\lambda$ is an  invariant set of the R-IFS $\{ R_1,...,R_m, f_1,...,f_n \}. $  Thus, there are an infinite amount of compact invariant sets of this $\mathcal{F_R}.$
\end{proof}

For $k\in \mathbb{N}$, let  $R_u=R_{u_0}\circ R_{u_1}\circ ... \circ R_{u_k}$ where $u=u_0 ... u_k$ and $u_i=u_i^0 u_i^1...u_i^l, \ u_i^j\in \{1,...,m\}$. 

\begin{theorem}\label{dl3}
There exists the minimal invariant set $X_0$  of  R-IFS $\{ R_1,...,R_m, f_1,..., f_n \}.$  
\end{theorem}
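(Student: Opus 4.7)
The plan is to construct $X_0$ explicitly as the closure of the forward orbit, under the Hutchinson operator $F_{\mathcal{F_R}}$, of the collection of fixed points of the contractions $f_1,\dots,f_n$. This is the natural candidate because every invariant set must already contain each of these fixed points, so anything generated from them is forced to be inside every invariant set.

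Concretely, let $p_j$ denote the unique fixed point of $f_j$ (which exists by the Banach contraction principle), and set $Z_0:=\{p_1,\dots,p_n\}$, $Z_k:=F_{\mathcal{F_R}}^{k}(Z_0)$, and
$$X_0 := cl\Bigl(\bigcup_{k\ge 0} Z_k\Bigr).$$
Because $p_j = f_j(p_j)\in f_j(Z_0)\subset Z_1$, one gets $Z_0\subset Z_1$, and an easy induction yields the chain $Z_k\subset Z_{k+1}$, so $F_{\mathcal{F_R}}\bigl(\bigcup_k Z_k\bigr)=\bigcup_k Z_k$. To prove minimality, fix any invariant set $A$ and any $x\in A$: since $f_j^{\,k}(x)\in A$ and $f_j^{\,k}(x)\to p_j$ by contraction, closedness of $A$ forces $p_j\in A$, hence $Z_0\subset A$; iterating and using that $F_{\mathcal{F_R}}(A)=A$ gives $Z_k\subset A$ for all $k$, and closedness of $A$ then gives $X_0\subset A$.

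The main obstacle is verifying that $X_0$ itself is invariant, i.e.\ $F_{\mathcal{F_R}}(X_0)=X_0$. Continuity of the $R_i$ and $f_j$ gives the easy inclusion $F_{\mathcal{F_R}}(X_0)\subset X_0$, but the reverse inclusion is delicate because a finite union of continuous images of a closed set need not be closed in general. The key trick is to exploit Theorem~\ref{dl2}: each closed ball $S(\lambda)$ with $\lambda>\lambda_0$ is invariant, and one checks $\|p_j\|\le \|f_j(0)\|/(1-r_j)\le \lambda_0$, so $Z_0\subset S(\lambda)$ and therefore $Z_k\subset S(\lambda)$ for every $k$. Consequently $X_0\subset S(\lambda_0)$ is bounded and closed, hence compact in $\mathbb{R}^d$. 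Then $F_{\mathcal{F_R}}(X_0)$, being a finite union of continuous images of a compact set, is compact and in particular closed; since it contains $F_{\mathcal{F_R}}\bigl(\bigcup_k Z_k\bigr)=\bigcup_k Z_k$, taking closure gives $X_0\subset F_{\mathcal{F_R}}(X_0)$, completing the proof.
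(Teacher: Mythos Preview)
Your proof is correct, but it follows a genuinely different path from the paper's. The paper builds $X_0$ via a \emph{coding} construction: it first proves (by a Cauchy argument using the contraction ratios) that every limit of the form $\lim_{k\to\infty} R_{u_0}f_{j_0}R_{u_1}f_{j_1}\cdots R_{u_k}f_{j_k}(0)$ exists, defines $X^*$ to be the set of all such limits, and sets $X_0=cl(X^*)$. Invariance is then read off from the word structure (prepending a symbol to the address), and minimality follows because the same limit is obtained when $0$ is replaced by any point $x$ of a closed invariant set $X$, so the limit lies in $X$.

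Your route is the \emph{orbit} construction: start from the Banach fixed points $p_j$, iterate the Hutchinson operator, and close up. Your minimality argument (every closed invariant set contains the $p_j$, hence every $Z_k$) is more elementary than the paper's, and you are explicit about the one subtle point the paper glosses over, namely why taking closure preserves the equality $F_{\mathcal{F_R}}(X_0)=X_0$: you use Theorem~\ref{dl2} to get boundedness, hence compactness in $\mathbb{R}^d$, hence closedness of $F_{\mathcal{F_R}}(X_0)$. What the paper's approach buys, on the other hand, is an explicit address description of the points of $X_0$; this is exactly what is reused in the proof of the next theorem (where $G=\langle R_1,\dots,R_m\rangle$ is finite) to identify $X_0$ with the attractor of the derived IFS $\{g_i\circ f_j\}$. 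Your $X_0$ is of course the same set, but your description does not immediately hand you that identification.
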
 

\begin{proof}
At first, we prove that  the limit $\lim\limits_{k\rightarrow\infty}  R_{u_0}f_{j_0}R_{u_1}f_{j_1}...R_{u_k}f_{j_k}(0)$ exists for  any finite  index $u_0,  ..., u_k$ with $u_i=u_i^0 u_i^1...u_i^l, \ u_i^j\in  \{1,...,m\}$ and $j_0,...,j_k \in \{1,...,n\}.$

Indeed, we have  $$||R_{u_0}f_{j_0}R_{u_1}f_{j_1}...R_{u_k}f_{j_k}(x)-R_{u_0}f_{j_0}R_{u_1}f_{j_1}...R_{u_k}f_{j_k}(0)||\leq {r^k}\lambda_0$$ for any $x\in \mathbb{R}^d,$ where $\lambda_0$ is a const positive number defined in the proof of Theorem \ref{dl2}, and $r=\max\{r_i: i\in \{j_0, j_1, ..., j_k\}\}$. So for any $p\in \mathbb{N},$ put  $x=R_{u_{k+1}}f_{j_{k+1}}...R_{u_{k+p}}f_{j_{k+p}}(0)$ then 
\begin{align*}
			&||R_{u_0}f_{j_0}R_{u_1}f_{j_1}...R_{u_{k+p}}f_{j_{k+p}}(0)-R_{u_0}f_{j_0}R_{u_1}f_{j_1}...R_{u_k}f_{j_k}(0)|=\\
			&=||R_{u_0}f_{j_0}R_{u_1}f_{j_1}...R_{u_k}f_{j_k}(x)-R_{u_0}f_{j_0}R_{u_1}f_{j_1}...R_{u_k}f_{j_k}(0)|| \leq r^k\lambda_0.
		\end{align*}
 By  Cauchy's general principle of convergence,  the limit   $\lim\limits_{k\rightarrow\infty}  R_{u_0}f_{j_0}R_{u_1}f_{j_1}...R_{u_k}f_{j_k}(0)$ exists.

Now let $X_0$   be a closure of  set  $$X^*=\{\lim\limits_{k\rightarrow \infty}  R_{u_0}f_{j_0}R_{u_1}f_{j_1}...R_{u_k}f_{j_k}(0): u_k=i^0_k ... i^{n_k}_k\in \bigcup_{t=0}^{\infty} \{1,...,m\}^t \}.$$

Then     $R_i(X^*)\subseteq X^*$ for all $i\in \{1,...,m\},$ \ $ f_j(X^*)\subseteq X^*$ for all $j\in \{1,...,n\}$ and $X^* \subset [\bigcup\limits_{i=1}^m R_i(X^*)]\bigcup [\bigcup\limits_{j=1}^n f_j(X^*)]$. So one has $$X_0=[\bigcup_{i=1}^m R_i(X_0)] \bigcup [\bigcup_{j=1}^n f_j(X_0)]. $$
This means $X_0$ is an invariant set  of the  R-IFS  $\{ R_1,...,R_m,\ f_1,...,f_n \}.$

 Now, we are going to show that $X_0 \subseteq X$  for any closed bounded set  
 $$X=[\bigcup_{i=1}^n R_i(X)]\  \ \bigcup \ \ [\bigcup_{j=1}^m f_j(X)].$$ Indeed, let $y=\lim\limits_{k\rightarrow\infty}  R_{u_0}f_{j_0}R_{u_1}f_{j_1}...R_{u_k}f_{j_k}(0)\in X_0$ then        $y \in X$ as  $$\lim_{k\rightarrow\infty}  R_{u_0}f_{j_0}R_{u_1}f_{j_1}...R_{u_k}f_{j_k}(0)=\lim_{k\rightarrow\infty}  R_{u_0}f_{j_0}R_{u_1}f_{j_1}...R_{u_k}f_{j_k}(x)$$ for any $x\in X$.

\end{proof}

\begin{remark} By  Theorem \ref{dl1}  and Theorem \ref{dl3}, the R-IFS $\mathcal{F_R}=\{ R_1,...., R_m, f_1,..., f_n \}$ has a compact  semiattractor.
\end{remark} 

The above proof also provides a constructive approach for the minimal invariant set. This approach is used to prove the following results as well as to draw the example of R-IFS.

The following result shows a necessary condition in which each  minimal invariant set of any R-IFS is the attractor of some IFS. Now, let     $$G=<R_1,..., R_n>=\{R_u:\  u \in \bigcup\limits_{t=0}^{\infty} \{1,...,m\}^t\}.$$

\begin{theorem}
If $G$  is a finite group of isometric mappings, then the minimal invariant set  $X_0$  of the R-IFS $\{R_1,..., R_m,\ f_1,..., f_m\}$  is an attractor of some iterated function system. 
\end{theorem}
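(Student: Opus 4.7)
The plan is to construct the desired IFS directly from the R-IFS. Set
$$\mathcal{F}' = \{R \circ f_j : R \in G,\ 1 \le j \le n\}.$$
Because $G$ is finite, $\mathcal{F}'$ is a finite family, and because each $R$ is an isometry, each $R \circ f_j$ is a Banach contraction with the same contraction ratio as $f_j$. Thus $\mathcal{F}'$ is a genuine IFS, and by the classical Hutchinson theorem it admits a unique Hutchinson attractor $A$; by Theorem \ref{dl1}(i), $A$ is the unique nonempty closed bounded invariant set of $\mathcal{F}'$. My strategy is to show $X_0 = A$ by checking that $X_0$ is itself such a set.

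First I would verify that $X_0$ is closed and bounded. Closedness holds by construction ($X_0 = \overline{X^*}$). For boundedness, the proof of Theorem \ref{dl3} shows $X_0$ is contained in every R-IFS invariant set; taking for instance the closed ball $S(\lambda)$ from Theorem \ref{dl2} gives $X_0 \subseteq S(\lambda)$, so $X_0$ is in fact compact.

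Next I would verify $\mathcal{F}'$-invariance of $X_0$. For one direction, since $G = \langle R_1,\ldots,R_m\rangle$ and $R_i(X_0) \subseteq X_0$ for each generator $R_i$, induction on word length gives $R(X_0) \subseteq X_0$ for every $R \in G$; combined with $f_j(X_0) \subseteq X_0$ this yields $F_{\mathcal{F}'}(X_0) \subseteq X_0$. For the reverse inclusion I would use the explicit description of $X^*$: a typical element decomposes as
$$p = \lim_{k\to\infty} R_{u_0}f_{j_0}R_{u_1}f_{j_1}\cdots R_{u_k}f_{j_k}(0) = (R_{u_0}\circ f_{j_0})\Bigl(\lim_{k\to\infty} R_{u_1}f_{j_1}\cdots R_{u_k}f_{j_k}(0)\Bigr),$$
where the inner limit exists (by Theorem \ref{dl3}) and belongs to $X^* \subseteq X_0$, while $R_{u_0}\circ f_{j_0} \in \mathcal{F}'$ since $R_{u_0} \in G$. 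Hence $X^* \subseteq F_{\mathcal{F}'}(X_0)$; since $F_{\mathcal{F}'}(X_0)$ is compact (a finite union of continuous images of the compact set $X_0$), passing to the closure gives $X_0 \subseteq F_{\mathcal{F}'}(X_0)$.

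The step I expect to require the most care is this reverse inclusion: one must handle the case when $u_0$ is the empty word, ensuring that $R_{u_0}\circ f_{j_0}$ still belongs to $\mathcal{F}'$ (which is true because $G$, being a group, contains the identity), and check that the inner limit is genuinely an element of $X^*$ under the shifted indexing. Once $\mathcal{F}'$-invariance of $X_0$ is established, the uniqueness clause of Theorem \ref{dl1}(i) forces $X_0 = A$, exhibiting $X_0$ as the attractor of the IFS $\mathcal{F}'$.
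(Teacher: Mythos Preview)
Your proposal is correct and uses the same core construction as the paper: both build the IFS $\mathcal{F}' = \{g \circ f_j : g \in G,\ 1 \le j \le n\}$ from the finite group $G$. The only difference is in the final identification of $X_0$ with the attractor of $\mathcal{F}'$: the paper invokes Hutchinson's coding description of the attractor of $\mathcal{F}'$ and observes it coincides literally with the set $X^*$ from Theorem~\ref{dl3}, whereas you instead verify directly that $X_0$ is a nonempty closed bounded $\mathcal{F}'$-invariant set and appeal to the uniqueness clause of Theorem~\ref{dl1}(i). Both routes are short; yours is slightly more self-contained since it does not rely on the coding characterisation of Hutchinson attractors, while the paper's is marginally quicker once that characterisation is granted.
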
 
\begin{proof}
Assume that the finite group $G$ consisting of elements
 $\{g_1,...,g_l\}.$ Clearly that the identity mapping $id\in G.$ Consider  a collection of $l\times n$ mappings are defined by $\{g_i\circ f_j:\  (i,j) \in \{1,...,l\}\times\{1,...,n\}\},$ then this function system is an IFS.   Let $X_1$ be an invariant set of this iterated function system.  By the result of John E. Hutchinson  \cite{Hut81}, $X_1$  is a closure of the set  
   $$X^*=\{\lim\limits_{k\rightarrow \infty} g_{i_0}f_{j_0}g_{i_1}f_{j_1}...g_{i_k}f_{j_k}(0): i_0,i_1,...,i_k \in \{1,...,l\}, j_0,j_1,...,j_k \in \{1,...,n\} \}$$
   $$=\{\lim\limits_{k\rightarrow \infty} R_{u_0}f_{j_0}R_{u_1}f_{j_1}...R_{u_k}f_{j_k}(0), u_k=i^0_k ... i^{n_k}_k\in \bigcup_{t=0}^{\infty} \{1,...,m\}^t,j_0,j_1,...,j_n \in \{1,...,n\} \}$$
   is defined in Theorem 2. So  $X_1=X_0,$ where $X_0$ is the minimal invariant set of the R-IFS $\{ R_1,...,R_m, f_1,...,f_n \}.$
\end{proof}

\begin{remark}
As the consequence, the invariant set  respect to  IFS $\{f_1,..., f_n\}$ is the minimal invariant set of the R-IFS $\{id, f_1,..., f_n\}$, where $id$ is the identity mapping. Thus, we can consider that the class of R-IFS is broader than the class of IFS.
\end{remark}

\begin{example}
 In $\mathbb R^2$, the minimal invariant set  of R-IFS  consisting of two mappings  
 $$ \{ f_1(z)=e^{i2\pi/3}z , f_2(z)=\displaystyle \frac{z}{2} +1\} $$ 
is the invariant set of IFS $$\{f_2(z),f_1\circ f_2(z), f_1(z)\circ f_1(z)\circ f_2(z)\},$$ i.e., the invariant set, in this case, is the Sierpinski gasket, see Figure \ref{SierpinskiGasket}.
\end{example}

\begin{figure}
\begin{center}
\includegraphics[width=7.2cm,height=7.4cm]{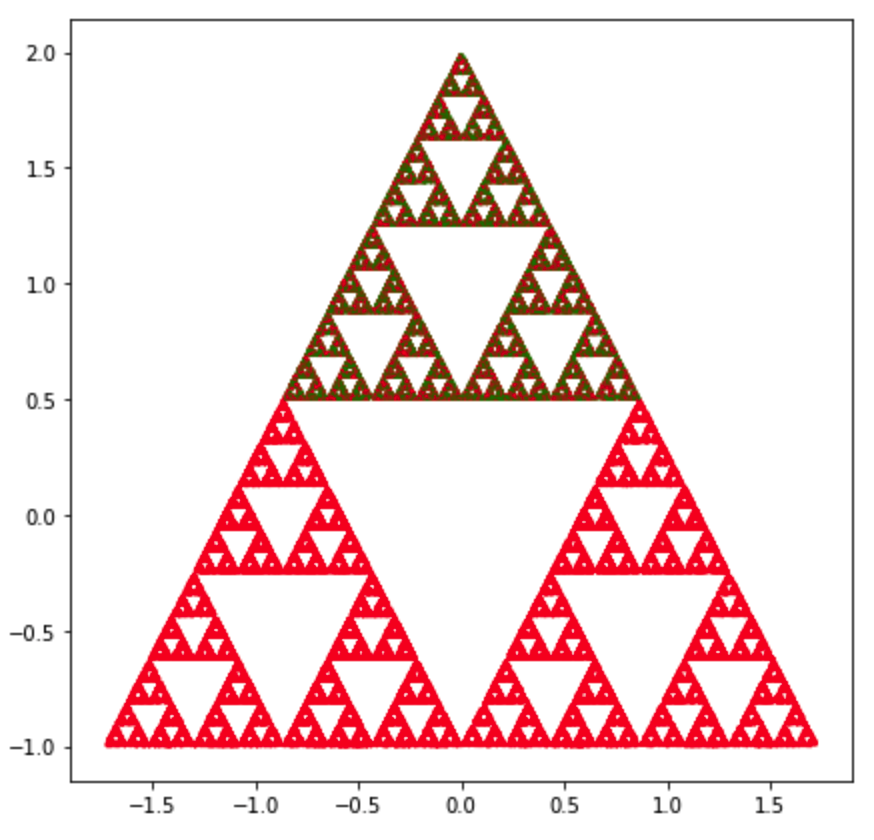}
\caption{The Sierpinski gasket with two mappings }
\label{SierpinskiGasket}
\end{center} 
\end{figure} 

\begin{example}
 A fractal $n$-gon is the invariant set of IFS  $ \{f_k(z) = \lambda z + c_k,k = 1, ...,n\}$ of which there is a rotation in the plane which acts transitively on the pieces, permuting them in an $n$-cycle. Bandt and Hung \cite{ngon} showed that any fractal $n$-gon is similar to a self-similar set 
 $$A =\bigcup_{k=1}^n f_k(A)$$
 where $f_k(z) = \lambda z + b^k$ for $k = 1,..., n$, where $b = cos(\frac{2\pi}{n})+i*sin(\frac{2\pi}{n})$ and $|\lambda| < 1.$ Now we are going to show that $A$ is the minimal invariant set of the R-IFS $$\{g(z)=b*z, f_1(z)=\lambda z + 1\}.$$ Indeed, as in the proof of Theorem 3, the minimal invariant set of the R-IFS  $$\{g(z)=b*z, f_1(z)=\lambda z + 1\}$$  is the invariant set of the IFS $\{f_k(z) = b^k\lambda z + b^k: \  k=1,..,n\}$. The invariant set  of the last IFS is the same as $A.$
\end{example}

 To show that the class of all invariant sets of IFSs is a proper subset of the class of all  minimal invariant sets of R-IFSs, at first, we recall the ‘Cantor target’, see the Figure \ref{Cantors}, which is an example satisfies the condition $$\dim_H(E \times F) = \dim_HE + \dim_HF.$$
 
The ‘Cantor target’ is the set swept out by rotating the middle third Cantor set about an endpoint. Thus, this set is the plane set given in polar coordinates by $$C = \{(r, \theta): r=||x||\ x \in F, 0 \leq \theta \leq 2\pi\}$$ where $F$ is the middle third Cantor set.

\begin{figure}
\begin{center}
\includegraphics[width=7.4cm,height=7.4cm]{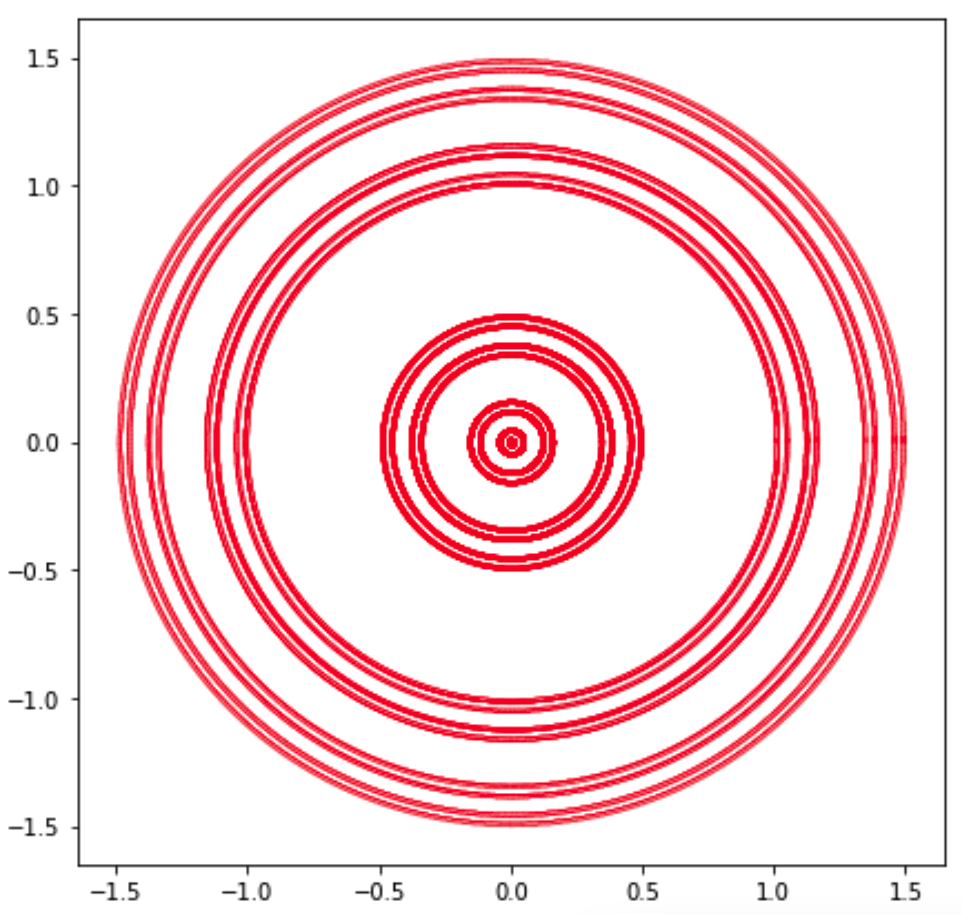}
\caption{The Cantor target p.104 \cite{falconer2004fractal}}
\label{Cantors}
\end{center} 
\end{figure}  

\begin{figure}
\begin{center}
\includegraphics[width=8.0cm,height=7.4cm]{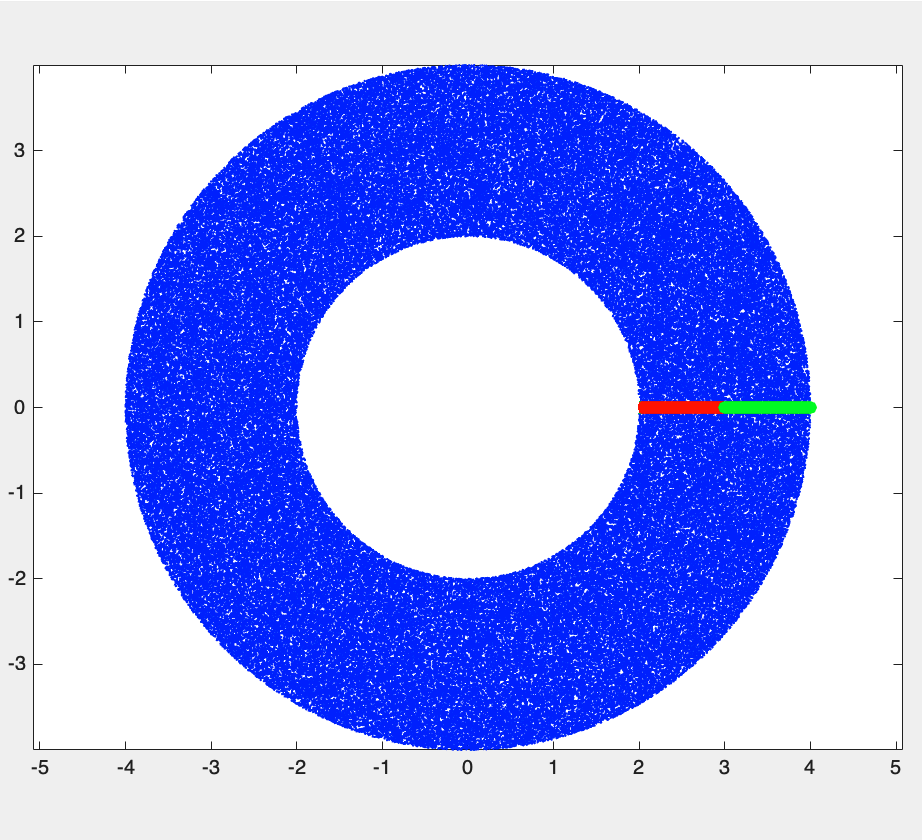}
\caption{The "ring" is the minimal invariant of the R-IFS $\left \{ g;\ f_1(x,y) =\big(\frac{\sqrt{x^2+y^2}}{3}, 0 \big)+\big(1, 0\big);\  f_2(x,y)=\big(\frac{\sqrt{x^2+y^2}}{3}, 0 \big)+\big(2, 0\big)\right \} $  }
\label{ring}
\end{center} 
\end{figure}  

\begin{figure}
\begin{center}
\includegraphics[width=8.0cm,height=7.4cm]{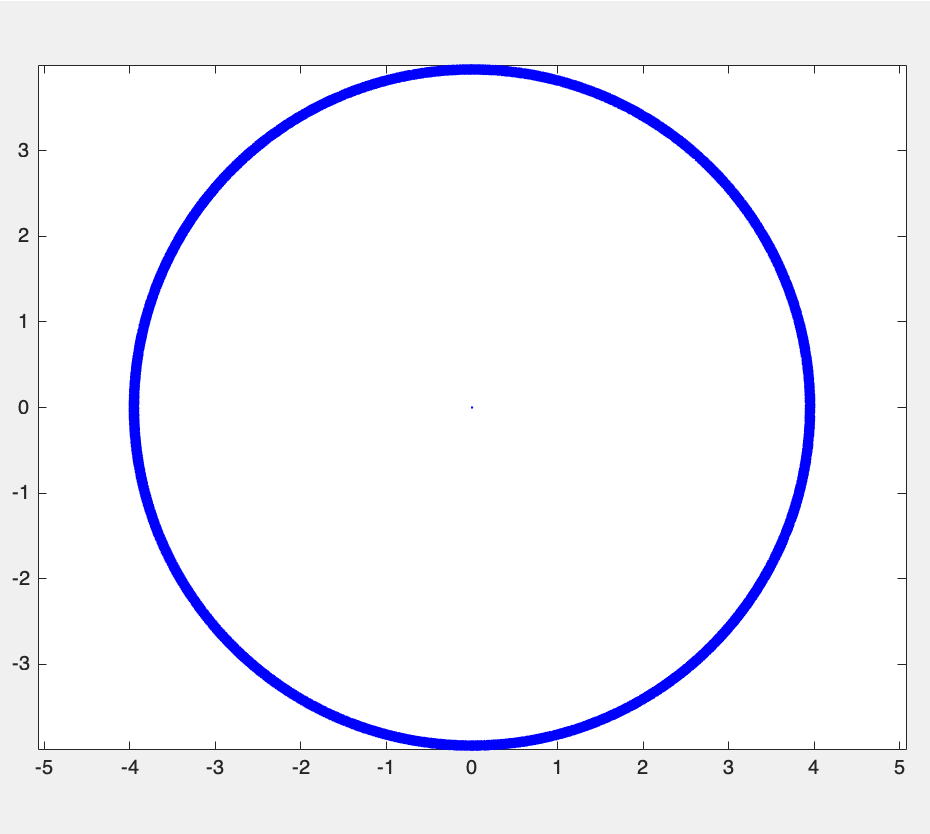}
\caption{The circle is the minimal invariant of the R-IFS
  $ \left \{ g;\ f_1(x,y) =\big(\frac{\sqrt{x^2+y^2}}{3}, 0 \big)+\big(2/3, 0\big)\right \}$   }
\label{Cir}
\end{center} 
\end{figure} 
    Now, we are going to show that the Cantor target is an invariant set induced by R-IFS but is not the invariant set of any IFS. 
\begin{proposition}
There are R-IFSs of which the minimal invariant sets are not the invariants of any IFS consisting of a system of Banach contractions.

\end{proposition}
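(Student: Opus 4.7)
My plan has two halves: first, exhibit the Cantor target $C$ as the minimal invariant set of an explicit R-IFS; second, rule out $C$ as an invariant set of any IFS of Banach contractions. For the first half, I mimic Figures \ref{ring} and \ref{Cir} by taking $\mathcal{F_R}=\{g;\, f_1,f_2\}$ on $\mathbb{R}^2$, where $g$ is rotation about the origin by an angle that is an irrational multiple of $2\pi$ (so the closure of $\langle g\rangle$ in $\mathrm{SO}(2)$ is the full rotation group), together with
\[
 f_1(x,y)=\Bigl(\tfrac{1}{3}\sqrt{x^2+y^2},\,0\Bigr),\qquad
 f_2(x,y)=\Bigl(\tfrac{1}{3}\sqrt{x^2+y^2}+\tfrac{2}{3},\,0\Bigr).
\]
Each $f_j$ is $\tfrac{1}{3}$-Lipschitz (by the reverse triangle inequality applied to $\sqrt{x^2+y^2}$), and $g(0)=0$, so $\mathcal{F_R}$ is a legitimate R-IFS. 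The constructive description in the proof of Theorem \ref{dl3} identifies the minimal invariant set with the closure of the limits $\lim_k g^{u_0}f_{j_0}\cdots g^{u_k}f_{j_k}(0)$; the radial coordinate of each such limit is produced by iterating the standard middle-third Cantor IFS $\{r\mapsto r/3,\,r\mapsto r/3+2/3\}$, hence lies in $F$, while the angular coordinate is a power of $g$ applied to previous angles, hence ranges over a dense subset of $[0,2\pi)$. Taking closure gives exactly $C=\bigcup_{r\in F}S_r$.

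For the second half, suppose toward contradiction that $C=\bigcup_{i=1}^n h_i(C)$ for an IFS of Banach contractions $h_i$ with ratios $\alpha_i<1$. The connected components of $C$ are exactly the pairwise-disjoint concentric circles $S_r$, $r\in F$ (with $S_0=\{0\}$). Continuity of $h_i$ together with $h_i(C)\subseteq C$ forces $h_i(S_r)\subseteq S_{\phi_i(r)}$ for a well-defined map $\phi_i\colon F\to F$. Evaluating $h_i$ on two radially aligned points $rp,\,r'p\in C$ (with $p$ a fixed unit vector) gives $|\phi_i(r)-\phi_i(r')|\le\alpha_i|r-r'|$, so each $\phi_i$ is itself an $\alpha_i$-contraction of $F$, and projecting $C=\bigcup h_i(C)$ onto radii yields the auxiliary equation $F=\bigcup_i\phi_i(F)$.

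The contradiction is then pursued at the outermost circle $S_1\subset C$. Because $\mathrm{diam}\,h_i(C)\le 2\alpha_i<2=\mathrm{diam}\,S_1$, each $h_i(C)\cap S_1$ is contained in a proper closed arc of $S_1$, and each piece $h_i(S_r)\subseteq S_1$ (for $r\in\phi_i^{-1}(1)$) has Euclidean diameter at most $2\alpha_i r\le 2\alpha_i$. The plan is to aggregate these arcs over all $i$ and over $r\in\phi_i^{-1}(1)$ and show that they cannot together cover $S_1$, contradicting $S_1\subseteq\bigcup_i h_i(C)$. The main obstacle is that $\phi_i^{-1}(1)$ may be uncountable (for example if $\phi_i$ is constant with value $1$), so a naive arc-length summation does not apply; a plain Baire-category attempt also fails, since individual arcs $h_i(S_r)$ can have nonempty interior in $S_1$. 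The refinement I would pursue is to exploit the Cantor-gap structure of $F$ near $1$ in combination with the induced IFS $\{\phi_i\}$ on $F$, translating the discrete nature of $F$ into a constraint that the arcs $h_i(S_r)\subseteq S_1$ cannot be packed to cover the continuous circle $S_1$. Making this translation precise is the hardest technical step that I anticipate.
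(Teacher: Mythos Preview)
Your first half---the choice of R-IFS and the identification of its minimal invariant set with the Cantor target $C=\bigcup_{r\in F}S_r$---is exactly the construction the paper uses. The gap is in your second half, and you have located it yourself: to derive a contradiction at the outer circle $S_1$ you need to show that the arcs $h_i(S_r)$ with $\phi_i(r)=1$ cannot cover $S_1$, but each fibre $\phi_i^{-1}(1)$ may be uncountable (e.g.\ if $\phi_i$ is constant), so neither arc-length summation nor Baire category applies. The ``translation'' you propose, using the gap structure of $F$ near $1$, is not actually carried out, and it is not clear how it would yield a covering obstruction, since a single $h_i$ could in principle send all of $C$ onto an arc of $S_1$ of positive length. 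As written, the proof stops at the hard step.

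The paper's contradiction goes in the opposite direction and avoids this covering problem entirely. Rather than asking which circles map \emph{into} $S_1$, it asks where the outermost circle $C_{x_0}=S_1$ goes: each $f_i(C_{x_0})$ is connected, hence contained in a single circle $C_{x_i^*}$. Using that $X_0\subset D_{x_0}$ (the closed disc of radius $1$) and that $\partial f_i(D_{x_0})\subset f_i(\partial D_{x_0})=f_i(C_{x_0})\subset C_{x_i^*}$, the paper infers $f_i(D_{x_0})\subset C_{x_i^*}$, hence $f_i(X_0)\subset C_{x_i^*}$. Then $X_0=\bigcup_{i=1}^n f_i(X_0)$ would lie in \emph{finitely many} circles, contradicting the uncountability of $\{S_r:r\in F\}$. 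The point is that only \emph{one} circle per map enters the count, so no uncountable union ever appears. (You may find the step ``$\partial f_i(D_{x_0})\subset f_i(\partial D_{x_0})$ implies $f_i(D_{x_0})\subset C_{x_i^*}$'' worth scrutinising for a general Banach contraction, but that is the paper's intended mechanism.)
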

\begin{proof}
In  $\mathbb{R}^2,$ let  $g$ be a rotation of  $\alpha. 2\pi,$  where  $\alpha$  is an irrational number. Consider the R-IFS
 $\left \{ g;\ f_1(x,y) =\big(\frac{\sqrt{x^2+y^2}}{3}, 0 \big);\  f_2(x,y)=\big(\frac{\sqrt{x^2+y^2}}{3}, 0 \big)+\big(\frac{2}{3}, 0\big)\right \}.$  Then the minimal invariant $X_0$ of this R-IFS  is a union of uncountable discrete circles $C_{x_c}$ with center  $O(0,0)$  radius  $x_c$  where  $x_c$  in the Cantor set  $X,$  i.e.,  $X_0=\displaystyle  \bigcup_{ x_c\in X}C_{x_c}.$  By Kronecker’s density theorem \cite{hawkins2013mathematics}, the closures of  $\{g^n(C_{x_c})\}$  is a circle  $C_{x_c}$ for $n=1,2,...$ Since  $X_0$  is close, we get $C_{x_c}\subset X_0.$

Assuming that there is an IFS  $\{f_1,...,f_n\}$  such that  $X_0=\bigcup\limits_{i=1}^n f_i(X_0).$  Now, take  $||x_0||=\max arg\{||x||: \ x\in X_0\}.$  Then
$C_{x_c}\subset X_0\subset D_{x_0},$ where  $D_{x_0}$  is the disc of center  $O(0,0)$  and radius  $||x_0||.$ Hence,   $f_i(C_{x_0})\subset f_i(X_0)\subset f_i(D_{x_0})$   for all  $i=1,..., n.$

Note that for any $i=1,..., n$  the boundary of  $f_i(D_{x_0})$  is the subset of image of the boundary of  $D_{x_0}$  by  $f_i.$  \ (1)

For any  $i=1,..., n,$  since   $f_i(C_{x_0})$  is connected,  $f_i(C_{x_0})$  could only belong to one circle  $C_{x^*_i}$  for an fixed  $x^*_i$  in the Cantor set. By (1),  $f_i(D_{x_0})\subset C_{x^*_i}.$ Hence  $f_i(X_0)\subset C_{x^*_i}.$

Therefore,  $X_0=\bigcup\limits_{i=1}^n f_i(X_0)\subset \bigcup\limits_{i=1}^n C_{x^*_i}.$ This contradict to  $X_0=\displaystyle \bigcup_{ x_c\in X}C_{x_c}.$ 
\end{proof}

\begin{example}\label{newex} 
Similar to the above proof, given $g$ a rotation of $\alpha.2\pi$ with an irrational number $\alpha$,  the minimal invariant of the R-IFS $$\left \{ g;\  f_1(x,y) =\big(\frac{\sqrt{x^2+y^2}}{3}, 0 \big)+\big(1, 0\big); \  f_2(x,y)=\big(\frac{\sqrt{x^2+y^2}}{3}, 0 \big)+\big(2, 0\big)\right \}$$  is a ring (Figure \ref{ring}), and the minimal invariant of the R-IFS
 $$\left \{ g;\  f_1(x,y) =\big(\frac{\sqrt{x^2+y^2}}{3}, 0 \big)+\big(2/3, 0\big)\right \}$$  is a circle (Figure \ref{Cir}).
 \end{example}

\section{Conclusion}

This paper introduces a new class of iterative function systems, R-IFS, generated by combining rotation/reflection mappings with contraction mappings. Within this class, there exist infinitely many invariant sets, but each R-IFS has a unique minimal invariant set. Any invariant set of a given IFS can also be the minimal invariant set of a corresponding R-IFS. Notably, some symmetric self-similar sets can be invariant sets of R-IFS with fewer mappings than those required by the original IFS. Furthermore, this paper presents examples that belong to the new class but cannot be the invariant sets of any IFS.



\Addresses

\begin{thebibliography}{KCCH19}

\bibitem[Ban22]{Banach1922}
S.~Banach.
\newblock Sur les opérations dans les ensembles abstraits et leur application
  aux équations intégrales.
\newblock {\em Fundamenta Mathematicae}, 3(3):133--181, 1922.

\bibitem[Bar14]{barnsley2014fractals}
Michael~F Barnsley.
\newblock {\em Fractals everywhere}.
\newblock Academic press, 2014.

\bibitem[Bee93]{beer1993topologies}
Gerald Beer.
\newblock {\em Topologies on closed and closed convex sets}, volume 268.
\newblock Springer Science \& Business Media, 1993.

\bibitem[BH08]{ngon}
Christoph Bandt and Nguyen~Viet Hung.
\newblock Fractal $n$-gons and their mandelbrot sets.
\newblock {\em Nonlinearity}, 21(11), 2008.

\bibitem[BM14]{barrozo2014countable}
Mar{\'\i}a~Fernanda Barrozo and Ursula Molter.
\newblock Countable contraction mappings in metric spaces: invariant sets and
  measure.
\newblock {\em Central European Journal of Mathematics}, 12:593--602, 2014.

\bibitem[Fal04]{falconer2004fractal}
Kenneth Falconer.
\newblock {\em Fractal geometry: Mathematical foundations and applications}.
\newblock John Wiley \& Sons, 2004.

\bibitem[Haw13]{hawkins2013mathematics}
Thomas Hawkins.
\newblock The mathematics of frobenius in context.
\newblock {\em Sources and Studies in the History of Mathematics and Physical
  Sciences. Springer}, 2013.

\bibitem[Hut81]{Hut81}
John~E. Hutchinson.
\newblock Fractals and self similarity.
\newblock {\em Indiana University Mathematics Journal}, 30(3):713--747, 1981.

\bibitem[KCCH19]{kumari2019multi}
Sudesh Kumari, Renu Chugh, Jinde Cao, and Chuangxia Huang.
\newblock Multi fractals of generalized multivalued iterated function systems
  in b-metric spaces with applications.
\newblock {\em Mathematics}, 7(10):967, 2019.

\bibitem[LM00]{lasota2000attractors}
Andrzej Lasota and Jozef Myjak.
\newblock Attractors of multifunetions.
\newblock {\em Bulletin of the Polish Academy of Sciences. Mathematics},
  48(3):319--334, 2000.

\bibitem[LS22]{lesniak2022iterated}
Krzysztof Le{\'s}niak and Nina Snigireva.
\newblock Iterated function systems enriched with symmetry.
\newblock {\em Constructive Approximation}, 56(3):555--575, 2022.

\bibitem[Mau95]{mauldin1995infinite}
R~Daniel Mauldin.
\newblock Infinite iterated function systems: theory and applications.
\newblock In {\em Fractal geometry and stochastics}, pages 91--110. Springer,
  1995.

\bibitem[MS03]{myjak2003attractors}
J{\'o}zef Myjak and Tomasz Szarek.
\newblock Attractors of iterated function systems and markov operators.
\newblock In {\em Abstract and Applied Analysis}, volume 2003, pages 479--502.
  Hindawi, 2003.

\bibitem[PS20]{priyanka2020iterated}
SC~Priyanka and R~Shrivastava.
\newblock Iterated function system consisting of kannan contraction in
  complete-metric space.
\newblock {\em Turkish Journal of Computer and Mathematics Education
  (TURCOMAT)}, 11(3):2204--2214, 2020.

\bibitem[Str21]{strobin2021contractive}
Filip Strobin.
\newblock Contractive iterated function systems enriched with nonexpansive
  maps.
\newblock {\em Results in Mathematics}, 76(3):153, 2021.

\end{thebibliography}
\end{document}